\documentclass[a4paper,10pt,leqno]{article}
\usepackage{amsmath,amssymb,latexsym,amsfonts,mathrsfs}
\usepackage{amscd}
\usepackage{amsthm}
\usepackage{bm}
\usepackage[left=30truemm,right=30truemm]{geometry}
\usepackage[all]{xy}
\usepackage{verbatim}
\usepackage{color}
\allowdisplaybreaks

\newtheorem{mytheorem}{Theorem}[section]

\newtheorem{lem}[mytheorem]{Lemma}

\theoremstyle{definition}
\newtheorem{rem}[mytheorem]{{Remark}}

\newcommand{\R}{\mathbb{R}}

\newcommand{\N}{\mathbb{N}}
\newcommand{\Z}{\mathbb{Z}}

\newcommand{\CAL}[1]{{\cal #1}}

\newcommand{\la}{\lambda}

\newtheorem{ass}[mytheorem]{Assumption}
\newtheorem{definition}[mytheorem]{Definition}

\title
{Estimates on modulation spaces for Schr\"{o}dinger operators with time-dependent sub-linear vector potentials}
\date{}
\author{Keiichi Kato and Ryo Muramatsu}

\begin{document}
\maketitle

\begin{abstract}
In this paper, we give estimates of the solutions to Schr\"{o}dinger equation on modulation spaces with vector potential of sub-linear growth.
\end{abstract}

\section{Introduction}
In this paper, we consider the following initial value problem for the Schr\"{o}dinger equations with magnetic vector potentials of sub-linear growth
\begin{align}\label{eq1}
\begin{cases}
i \partial _t u(t,x) + \frac{1}{2}\left(\nabla - i \bm{a}(t,x) \right)^2 u(t,x) = 0,\quad (t, x)\in \R\times \R^n, \\
u(0,x) = u_0 (x),\quad x\in \R^n,
\end{cases}
\end{align}
and give estimates for the solutions in modulation spaces by the initial data, where $i=\sqrt{-1}$, $u(t,x)$ is a complex-valued unknown function of $(t,x)\in \R\times \R^n$, $u_0(x)$ is a complex-valued given function of $x\in{\mathbb R}^n$, $\partial_t u=\partial u/\partial t$, $\partial_{x_j} u = \partial u/\partial x_j\ (j=1,\ldots ,n)$ and $\nabla=(\partial_{x_1},\ldots,\partial_{x_n})$.

In \cite{M}, the second author has shown estimates of the solutions of \eqref{eq1} on modulation spaces in the case that $\bm{a}(t,x)=A(t)x$
with real symmetric matrices $A(t)$.
Here we shall show estimates on modulation spaces in the case that vector potential $\bm{a}(t,x)$ depends on $t$ and $x$ more generally but is sub-linear in $x$, which satisfies the following Assumption \ref{A2}.


Our basic strategy in previous papers \cite{K1} and \cite{M} is to estimate the integral equation on the phase space by the wave packet transform and to apply  Gronwall's inequality. However, this strategy is not applicable in the present case
since we have to estimate the canonical momentum of classical particle $\xi(t)$, which arises in the integral equation and comes from the first order differential term $-i\bm{a}(t,x)\cdot \nabla$.

%

\begin{ass}\label{A2}
For $j=1,\ldots, n$, $j$-th component $a_j(t, x):\mathbb{R}\times\mathbb{R}^n\rightarrow \mathbb{R}$ of $\bm{a}(t,x)$ is $C^{\infty}$-function with respect to $t,x$ and satisfies that there exists $\rho<1$ such that for any $\alpha \in \mathbb{Z}^n_{+}$, there exists a constant $C_{\alpha}>0$ satisfying
\begin{align}
\max_{1\leq j \leq n}|\partial_x^{\alpha}a_j(t, x)|\leq C_{\alpha} \langle x\rangle^{\rho - |\alpha|},\quad (t,x)\in\mathbb{R}\times\mathbb{R}^n
\end{align}
where, $\langle x\rangle=(1+|x|^2)^{1/2}$.
\end{ass} 
Let $\CAL{S}(\R^n)$ be the space of rapidly decreasing smooth functions on $\R^n$ and $\CAL{S}'(\R^n)$ be its dual space.
\begin{definition}[Wave packet transform]
Let $\varphi \in \CAL{S}(\mathbb{R}^n)\setminus \{0\}$ and $f \in \CAL{S} '(\mathbb{R}^n)$. The wave packet transform $W_{\varphi}f$ of $f$ with the basic wave packet $\varphi$ is defined by
\begin{align*}
W_{\varphi}f(x, \xi)\ =\ \int_{{\mathbb R}^n} \overline{\varphi(y-x)}e^{-iy\cdot \xi}f(y)dy,\quad (x, \xi) \in{\mathbb R}^n\times{\mathbb R}^n.
\end{align*}
\end{definition}
\begin{definition}[Modulation space]
Let $\varphi \in \CAL{S}(\mathbb{R}^n)\setminus \{0\}$ and $1\leq p, q\leq \infty$. We define the modulation spaces $M^{p,q}_{\varphi}({\mathbb R}^n)$ as follows.
\begin{align*}
M^{p, q}_{\varphi}({\mathbb R}^n) = \left\{f\in \CAL{S}'(\mathbb{R}^n)\ \middle|\ \|f\|_{M^{p,q}_{\varphi}}=
\left\|\|W_{\varphi}f(x, \xi)\|_{L^{p}_{x}}\right\|_{L^{q}_{\xi}} < \infty\right\}.
\end{align*}
\end{definition}

Our purpose in this study is to estimate the solution of the initial value problem \eqref{eq1} on modulation spaces. The following theorem is our main result.
\begin{mytheorem}\label{mt2}
Let $1\leq p\leq \infty$, $\varphi_0\in \CAL{S}(\mathbb{R}^n)\setminus \{0\}$, $T>0$ and $u(t,x)$ be the solution of
\eqref{eq1} in $C(\R; L^2(\R^n))$ for $u_0\in\CAL{S}(\R^n)$.
If $\bm{a}$ satisfies the Assumption \ref{A2},
then there exists $C_T>0$ such that 
\begin{align}\label{main}
\|u(t, \cdot)\|_{M^{p, p}_{\varphi_0}}\leq C_T \|u_0\|_{M^{p, p}_{\varphi_0}}
\end{align}
for all $u_0\in \CAL{S}(\mathbb{R}^n)$ and $t\in[-T, T]$.
\end{mytheorem}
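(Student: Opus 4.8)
The plan is to transport the whole problem to the phase space $\R^n_x\times\R^n_\xi$ by means of the wave packet transform, and to exploit that in the diagonal case $p=q$ the modulation norm is the full phase-space Lebesgue norm,
$\|u(t,\cdot)\|_{M^{p,p}_{\varphi_0}}=\|W_{\varphi_0}u(t,\cdot)\|_{L^p_{x,\xi}}$,
which is invariant under any canonical (symplectic) change of variables because such maps have Jacobian $1$. First I would record the commutation identities for $W_\varphi$, namely $W_\varphi(\partial_{y_j}f)=\partial_{x_j}W_\varphi f+i\xi_j W_\varphi f$, $W_\varphi(y_j f)=i\partial_{\xi_j}W_\varphi f$, and the window relation $W_{\partial_j\varphi}f=-\partial_{x_j}W_\varphi f$, and use them to convert \eqref{eq1} into an evolution equation for $W_{\varphi(t)}u(t,x,\xi)$, where $\varphi(t)$ is an auxiliary wave packet solving the free Schr\"odinger equation $i\partial_t\varphi=-\tfrac12\Delta\varphi$ so as to remove the leading $-\tfrac12\Delta$ term. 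Since $\bm{a}(t,y)$ is not polynomial in $y$, the magnetic terms $\bm{a}\cdot\nabla u$ and $|\bm{a}|^2u$ do not pass to clean differential operators; I would Taylor-expand each $a_j(t,y)$ about the wave packet center $y=x$, so that the principal part of the resulting equation is transport along the Hamiltonian vector field of $H(t,x,\xi)=\tfrac12|\xi-\bm{a}(t,x)|^2$, the remainder being controlled by the concentration of $\varphi$ together with the decay $\langle y\rangle^{\rho-|\alpha|}$ of the higher $x$-derivatives supplied by Assumption \ref{A2}.

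Next I would introduce the classical flow $(y(t),\eta(t))$ solving Hamilton's equations $\dot y=\eta-\bm{a}(t,y)$, $\dot\eta_j=\sum_k(\eta_k-a_k(t,y))\partial_{y_j}a_k(t,y)$ with data $(x,\xi)$, and integrate the transport equation along its characteristics to obtain a Duhamel representation on phase space. The main obstacle, flagged in the introduction, is the control of the canonical momentum $\eta(t)$. The key observation I would use is that Hamilton's equations involve $\bm{a}$ and its \emph{spatial} derivatives only, never $\partial_t\bm{a}$, so Assumption \ref{A2} applies directly: from $|\dot\eta|\le C|\eta-\bm{a}|\,\langle y\rangle^{\rho-1}$ and $|\dot y|\le|\eta|+C\langle y\rangle^{\rho}$, together with $\rho<1$ (hence $\langle y\rangle^{\rho-1}\le1$ and $\langle y\rangle^{\rho},\langle y\rangle^{2\rho-1}\le\langle y\rangle$ for $\langle y\rangle\ge1$), one derives a closed linear differential inequality for $|\eta(t)|+\langle y(t)\rangle$, and Gronwall's inequality yields $|\eta(t)|+\langle y(t)\rangle\le e^{C_T}(|\xi|+\langle x\rangle)$ on $[-T,T]$. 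Thus the momentum remains finite on compact time intervals, which is exactly what makes the phase-space argument close.

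Finally I would estimate in $L^p_{x,\xi}$. In the Duhamel representation the leading term is $W_{\varphi_0}u_0$ composed with the backward flow times a unit-modulus phase; since the flow is canonical its Jacobian equals $1$, so the change of variables $(x,\xi)\mapsto(y(t),\eta(t))$ preserves $\|\cdot\|_{L^p_{x,\xi}}$ and reproduces $\|u_0\|_{M^{p,p}_{\varphi_0}}$ up to the equivalence of modulation norms for the two windows $\varphi(t)$ and $\varphi_0$, both lying in $\CAL{S}(\R^n)\setminus\{0\}$. The remainder terms, carrying the factors $\langle y\rangle^{\rho-|\alpha|}$ with $\rho<1$ and the rapid decay of $\varphi$, are bounded in $L^p_{x,\xi}$ by $\|W_{\varphi(s)}u(s,\cdot)\|_{L^p_{x,\xi}}$ with a coefficient integrable in $s$ over $[-T,T]$; applying Gronwall's inequality in $t$ then gives \eqref{main} with a constant $C_T$ depending only on $T$, $\rho$ and the constants $C_\alpha$. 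The delicate technical point throughout is that the bound on $(y(t),\eta(t))$ depends on the base point $(x,\xi)$, so one cannot argue by pointwise-in-phase-space estimates; it is precisely the measure preservation built into the $p=q$ norm, rather than any decay in the $\xi$ variable, that absorbs the growth of the trajectories.
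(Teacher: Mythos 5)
Your proposal reproduces the paper's setup correctly (wave packet transform, conjugation to a transport equation along the Hamiltonian flow of $H=\tfrac12|\xi-\bm{a}(t,x)|^2$, Duhamel along characteristics, and the Jacobian-$1$ change of variables for the leading term), but it fails at exactly the point the paper's introduction flags as the central difficulty. Your control of the momentum, $|\eta(t)|+\langle y(t)\rangle\le e^{C_T}(|\xi|+\langle x\rangle)$, obtained by Gronwall on Hamilton's equations, is only a \emph{pointwise} bound in the base point $(x,\xi)$ and is unbounded over phase space. The dangerous remainder (the paper's $R_3$, coming from $-i\bm{a}\cdot\nabla u$) carries the multiplicative factor $\xi_j(\tau;t,x,\xi)$ times second derivatives of $a_j$ evaluated near $x(\tau)$, i.e.\ a coefficient of size $|\xi(\tau)|\langle x(\tau)\rangle^{\rho-2}$. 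With your bound this coefficient is merely $\lesssim |\xi|+\langle x\rangle$, and indeed at $\tau=t$ it equals $|\xi|\,\langle x\rangle^{\rho-2}$, which blows up as $|\xi|\to\infty$. Multiplication by an unbounded weight is unbounded on every $L^p_{x,\xi}$, $1\le p\le\infty$, and measure preservation of the flow does nothing to tame it (it only handles compositions, not multiplications). So your claimed step ``the remainder is bounded in $L^p_{x,\xi}$ by $\|W_{\varphi(s)}u(s)\|_{L^p_{x,\xi}}$ with a coefficient integrable in $s$, then apply Gronwall'' is not established; this is precisely the strategy of \cite{K1}, \cite{M} that the authors state is \emph{not applicable} here.

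What actually closes the argument in the paper is a different kind of estimate on the characteristics (Lemma \ref{L2}, taken from Yajima's Lemma 2.1 in \cite{Ya1}): the \emph{uniform-in-$(t,x,\xi)$} bound
\begin{align*}
\int_0^T\langle x(\tau;t,x,\xi)\rangle^{-1-\delta}\,|\xi(\tau;t,x,\xi)|\,d\tau\le C_{\delta}(1+T),
\end{align*}
which is not a growth/Gronwall estimate at all: it exploits that a particle with large momentum traverses the region where $\langle x(\tau)\rangle^{-1-\delta}$ is non-negligible in time $O(|\xi|^{-1})$, so the \emph{time integral} (not the integrand) is uniformly bounded. Because this bound lives under a $\sup_{x,\xi}$, it only yields the $L^\infty_x L^\infty_\xi$ estimate, which is why the paper proves $p=\infty$ separately and then gets $p=2$ from $L^2$ conservation, $2<p<\infty$ by Riesz--Thorin, and $1\le p<2$ by duality. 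Moreover, since $C_\delta(1+T)$ does not tend to $0$ as $T\to0$, even this bound cannot be absorbed directly; the paper additionally rescales the window, $\varphi^{\lambda}_0=\lambda^{n/2}\varphi_0(\lambda\,\cdot)$, to gain a factor $\lambda^{-1}$ in all remainder constants, closes the estimate on a short interval, and iterates. Your proposal contains neither the sojourn-time lemma nor any smallness mechanism, so the core of the proof is missing.
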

\begin{rem}
The estimate \eqref{main} is also valid in the case that the vector potential $\bm{a}(t,x)=A(t)x+\bm{a}_0(t,x)$ with $A(t)$ being a real symmetric matrix-valued smooth function and $\bm{a}_0(t,x)$ satisfying Assumption \ref{A2}, which follows from Theorem \ref{mt2} and the previous result \cite{M}.
\end{rem}

We denote the Fourier transform of $f\in \CAL{S}({\mathbb R}^n)$ by $\hat{f}$ or $\CAL{F}f$ and the inverse Fourier transform of $f\in \CAL{S}({\mathbb R}^n)$ by $\check{f}$ or $\CAL{F}^{\ast}f$.
We denote the Schr\"{o}dinger propagator of a free particle by $e^{it\Delta/2}$, which is defined by
\begin{align*}
\left(e^{it\frac{\Delta}{2}}f\right)(x)=\CAL{F}^{\ast}_{\xi\rightarrow x}\left[e^{-it\frac{\xi^2}{2}}\hat{f}(\xi)\right](x),\quad f\in\CAL{S}({\mathbb R}^n).
\end{align*} 

For simplicity, we use the notations
\begin{align*}
W_{\varphi(t,\cdot)}u(t, x, \xi)=W_{\varphi(t,\cdot)}\left[u(t,\cdot)\right](x, \xi)=\int_{{\mathbb R}^n} \overline{\varphi(t, y-x)}u(t, y)e^{-iy\cdot \xi}dy
\end{align*}
and $\|u(t)\|_{M^{p,q}_{\varphi(t,\cdot)}}=
\left\|\|W_{\varphi(t,\cdot)}u(t, x, \xi)\|_{L^{p}_{x}}\right\|_{L^{q}_{\xi}}$.
%

Schr\"{o}dinger equations with time-independent magnetic potential $\bm{a}(t,x)=\bm{a}(x)$ have been investigated by B. Simon \cite{S1}, T. Kato \cite{T.K1}, and so on. In \cite{S1}, B. Simon showed the essentially self-adjointness of $H_0=-(\nabla -i\bm{a}(x))^2$ on $C^{\infty}_0$ when $\mathrm{div}\bm{a}=0$, $\bm{a}\in L^q_{\mathrm{loc}}(\R^n)$ with $q>\max(n, 4)$. In \cite{T.K1}, T. Kato relaxed some conditions of $\bm{a}(x)$ for $H_0$ to be essentially self-adjoint on $C^{\infty}_0$ stated in \cite{S1}. 
In H. Leinfelder and C. G. Simader's work \cite{LS}, they proved the existence and uniqueness of the $L^2$-solution to the equation \eqref{eq1} with $\bm{a}(x)$ under the more general assumption which allows $\bm{a}(x)$ to be in $L^4_{\mathrm{loc}}(\R^n)$ and its derivative to be in $L^2_{\mathrm{loc}}(\R^n)$. 

When the magnetic potential depends on time, this problem becomes more difficult and delicate. 
K. Yajima, in the work of \cite{Ya1}, proved the existence and uniqueness of the $L^2$-solution to the equation \eqref{eq1} and $L^p$-smoothing property of the unitary propagator $\left\{U(t,s)\middle| t,s\in\R\right\}$ of \eqref{eq1} assuming that growth of $\bm{a}(t,x)$ and $\partial_t\bm{a}(t,x)$ are equal to first degree polynomial at infinity; i.e. $|\bm{a}(t, x)|+|\partial_t\bm{a}(t,x)|\sim |x|$.

From these results, the equation \eqref{eq1} can be solved on $L^2(\R^n)$, while we cannot expect to solve this equation on $L^p(\R^n)$ for $p\neq 2$. On the contrary, there are many works on existence of solutions to the following Schr\"{o}dinger equations with scholar potentials $V\in C^{\infty}(\mathbb{R}\times\mathbb{R}^n)$ in modulation spaces.
\begin{align}\label{eq2}
\begin{cases}
i \partial _t u(t,x) + \frac{1}{2}\Delta u(t,x) = V(t,x)u(t,x),\quad (t, x)\in{\mathbb R}\times{\mathbb R}^n, \\
u(0,x) = u_0 (x),\quad x\in{\mathbb R}^n.
\end{cases}
\end{align}

In the work of A. B\'enyi, K. Gr\"ochenig, K. A. Okoudjou and L. G. Rogers \cite{BGOR}, it is shown that the Schr\"{o}dinger group $e^{it|\nabla|^{\alpha}}$ for $0\leq \alpha\leq 2$ is bounded on $M^{p,q}(\R^n)$ and this implies that the free Schr\"{o}dinger equation (i.e. equation \eqref{eq2} with $V(t,x)\equiv 0$) can be solved in $M^{p,q}(\R^n)$ for $1\leq p, q\leq \infty$. B. Wang and H. Hudzik showed that the global well-posedness of the nonlinear Schr\"{o}dinger equation with  power type nonlinearity by using the dispersive estimate for the free Schr\"{o}dinger equation in $M^{p,q}(\R^n)$, $\|u(t, \cdot)\|_{M^{p, q}}\leq C(1+|t|)^{-n(1/2-1/p)} \|u_0\|_{M^{p', q}}$, see \cite{WH}.
In the work of \cite{K2}, the solution to the free Schr\"{o}dinger equation or Schr\"{o}dinger equation with the harmonic oscillator preserve the norm of $M^{p,q}(\R^n)$, $\|u(t, \cdot)\|_{M^{p, q}_{\varphi(t,\cdot)}}= \|u_0\|_{M^{p, q}_{\varphi_{0}}}$.
In the case of time-dependent potential, the estimate of the solution to equation \eqref{eq2} with quadratic or sub-quadratic potential on $M^{p,q}(\R^n)$, $\|u(t, \cdot)\|_{M^{p, p}_{\varphi(t, \cdot)}}\leq C_T \|u_0\|_{M^{p, p}_{\varphi_{0}}}$ is obtained in \cite{CGNR} and \cite{K1}.

This paper is organized as follows. In Section \ref{sec2}, we introduce terminology and preliminalies. we will give the representation of solution of \eqref{eq1} by wave packet transform and introduce some lemmas on characteristics corresponding to \eqref{eq1}. In Section \ref{sec3}, we will prove the Theorem \ref{mt2}.

\section{Preliminaries}\label{sec2}

\renewcommand{\labelenumi}{(\theenumi)}
In this section, we prepare several
lemmas for the proof of Theorem \ref{mt2}.
We firstly remark the following properties of modulation spaces (For more details and proofs, see \S4 and \S6 in \cite{Fe}).
\begin{lem}\label{msp}
Let $1\leq p,q,p_1,q_1,p_2,q_2\leq \infty$. Then
\renewcommand{\theenumi}{\roman{enumi}}
\begin{enumerate} 
\item $M^{p_1,q_1}_{\varphi}({\mathbb R}^n)\hookrightarrow M^{p_2,q_2}_{\varphi}({\mathbb R}^n)$ for $p_1\leq p_2$, $q_1\leq q_2$.
\item $M^{p,q_1}_{\varphi}({\mathbb R}^n)\hookrightarrow L^p({\mathbb R}^n)\hookrightarrow M^{p,q_2}_{\varphi}({\mathbb R}^n)$ for $1\leq q_1\leq \min(p, p')$ and $q_2\geq \max(p,p')$ with $1/p+1/p'=1$. In particular, $M^{2,2}_{\varphi}({\mathbb R}^n)=L^2({\mathbb R}^n)$ holds.
\item $\CAL{S}({\mathbb R}^n)$ is dense in $M^{p,q}_{\varphi}({\mathbb R}^n)$ for $1\leq p,q< \infty$.
\item $M^{p,q}_{\varphi}({\mathbb R}^n)$ is a Banach space with norm $\|\cdot\|_{M^{p,q}_{\varphi}}$.
\item The definition of $M^{p,q}_{\varphi}({\mathbb R}^n)$ is independent of  the choice of the basic wave packet  $\varphi$. More precisely, for any $\varphi, \psi \in \CAL{S}(\mathbb{R}^n)\setminus \{0\}$, the norm $\|\cdot\|_{M^{p,q}_{\varphi}}$ is equivalent to the norm $\|\cdot\|_{M^{p,q}_{\psi}}$.
\end{enumerate}
\end{lem}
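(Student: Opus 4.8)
The plan is to treat all five assertions as consequences of the short-time Fourier transform (STFT) calculus, since $|W_\varphi f(x,\xi)|$ coincides, up to a dimensional constant and a harmless rescaling of $\xi$, with the modulus of the STFT of $f$ with window $\varphi$; thus $\|f\|_{M^{p,q}_\varphi}=\|W_\varphi f\|_{L^q_\xi(L^p_x)}$ is a mixed-norm of a time-frequency representation. First I would establish, directly from the definition via Fourier inversion and Fubini, three foundational facts: the orthogonality relation $\langle W_\varphi f,W_\varphi g\rangle_{L^2(\R^{2n})}=c_n\|\varphi\|_2^2\langle f,g\rangle$ for $f,g\in L^2$, the inversion formula reconstructing $f$ from $W_\varphi f$, and — obtained by inserting the inversion with window $\psi$ into the definition of $W_\varphi f$ — the pointwise domination inequality $|W_\varphi f|\le \|\psi\|_2^{-2}\,(|W_\psi f|*|W_\varphi\psi|)$ on $\R^{2n}$, valid for every nonzero $\psi\in\CAL S(\R^n)$. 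The only structural inputs these require are Fourier inversion on $\CAL S(\R^n)$ and the fact that $\varphi,\psi\in\CAL S(\R^n)$ force $W_\varphi\psi\in\CAL S(\R^{2n})\subset L^1(\R^{2n})$.

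Then (v) and (i) become one-line applications of the mixed-norm Young inequality $\|F*G\|_{L^q_\xi L^p_x}\le\|F\|_{L^q_\xi L^p_x}\|G\|_{L^1(\R^{2n})}$. For (v) I apply the domination inequality and Young with $F=|W_\psi f|$ and $G=|W_\varphi\psi|$ to get $\|f\|_{M^{p,q}_\varphi}\le\|\psi\|_2^{-2}\|W_\varphi\psi\|_{L^1}\|f\|_{M^{p,q}_\psi}$; exchanging $\varphi$ and $\psi$ yields the norm equivalence. For (i) I take $\psi=\varphi$, so that $W_\varphi f$ equals a constant times $W_\varphi f*W_\varphi\varphi$, and apply the mixed-norm Young inequality with exponents $1+1/p_2=1/p_1+1/s$ and $1+1/q_2=1/q_1+1/r$; the hypotheses $p_1\le p_2$ and $q_1\le q_2$ are exactly what make $r,s\ge1$, and then $W_\varphi\varphi\in\CAL S(\R^{2n})$ lies in the required $L^r_\xi(L^s_x)$.

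For (ii), by (i) it suffices to treat the endpoint indices $q_1=\min(p,p')$ and $q_2=\max(p,p')$. Writing $W_\varphi f(x,\cdot)=\widehat{F_x}$ with $F_x(y)=\overline{\varphi(y-x)}f(y)$, I obtain the embeddings $L^p\hookrightarrow M^{p,\max(p,p')}$ directly: for $p\le2$, Minkowski's integral inequality moves the $L^{p'}_\xi$-norm inside, the Hausdorff--Young inequality gives $\|\widehat{F_x}\|_{L^{p'}_\xi}\le C\|F_x\|_{L^p_y}$, and Fubini collapses the result to $C\|\varphi\|_p\|f\|_p$; for $p\ge2$, Hausdorff--Young in the form $\|\widehat{F_x}\|_{L^p_\xi}\le C\|F_x\|_{L^{p'}_y}$ followed by Young's inequality for the convolution $|f|^{p'}*|\widetilde\varphi|^{p'}$ gives $C\|\varphi\|_{p'}\|f\|_p$. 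The complementary inclusions $M^{p,\min(p,p')}\hookrightarrow L^p$ then follow by the duality $(M^{p,q})^*=M^{p',q'}$ (itself read off from the STFT isometry into $L^q_\xi(L^p_x)$ and the duality of mixed-norm Lebesgue spaces), and the case $p=q=2$ combined with the orthogonality relation gives $M^{2,2}=L^2$.

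For (iv), the STFT realizes $M^{p,q}_\varphi(\R^n)$ as an isometric copy, inside the complete space $L^q_\xi(L^p_x)$, of the closed subspace cut out by the reproducing identity $F=\|\varphi\|_2^{-2}F*W_\varphi\varphi$; a Cauchy sequence therefore has an STFT-limit $F$ there, and since $M^{p,q}_\varphi(\R^n)\hookrightarrow\CAL S'(\R^n)$ continuously (pairing against $\CAL S(\R^n)$ through the inversion formula), the sequence also converges in $\CAL S'(\R^n)$ to some $f$ with $W_\varphi f=F$, which proves completeness; injectivity of $W_\varphi$ makes $\|\cdot\|_{M^{p,q}_\varphi}$ a genuine norm. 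For (iii) with $p,q<\infty$, truncating the inversion integral to $\{|z|\le N\}$ produces functions $f_N\in\CAL S(\R^n)$ with $|W_\varphi(f-f_N)|\le\|\varphi\|_2^{-2}(\mathbf 1_{\{|z|>N\}}|W_\varphi f|*|W_\varphi\varphi|)$, so mixed-norm Young together with dominated convergence — which needs precisely $p,q<\infty$ — gives $f_N\to f$ in $M^{p,q}_\varphi(\R^n)$. The principal obstacle is setting up the foundational STFT calculus cleanly, so that (v) and (i) reduce to mixed-norm Young; within (ii) the sharp index bookkeeping through Hausdorff--Young, Minkowski, and duality is the most computation-heavy step.
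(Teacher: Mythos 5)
The paper offers no proof of this lemma at all --- it simply points to \S 4 and \S 6 of Feichtinger's report [Fe] (and to Gr\"ochenig's book for the inversion formula) --- and your outline correctly reproduces exactly the standard STFT-calculus proofs given in those references: pointwise domination plus mixed-norm Young for (v) and (i), Minkowski/Hausdorff--Young with a duality pairing for (ii), the closed-subspace/reproducing-identity argument in $L^q_\xi(L^p_x)$ for (iv), and truncation of the inversion integral for (iii). The one point to polish is your appeal in (ii) to $(M^{p,q}_\varphi)^{*}=M^{p',q'}_\varphi$, which as a duality isomorphism requires $p,q<\infty$, so at the endpoints $p\in\{1,\infty\}$ the inclusion $M^{p,\min(p,p')}_\varphi \hookrightarrow L^p$ should instead be read off directly from the inversion formula together with H\"older's inequality applied to the orthogonality pairing $\langle f,g\rangle = c\,\langle W_\varphi f, W_\varphi g\rangle$ --- an argument that needs only the embedding $L^{p'}\hookrightarrow M^{p',\max(p,p')}_\varphi$ you already proved, not the full duality theorem.
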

\begin{definition}[Inverse wave packet transform]
Let $\varphi \in \CAL{S}(\mathbb{R}^n)\setminus \{0\}$ and $F\in\CAL{S} '(\mathbb{R}^{2n})$, we define the adjoint operator $W_{\varphi}^{*}$ of $W_{\varphi}$ by 
\begin{align*}
W_{\varphi}^{*}[F(y, \xi)](x)\ =\ \iint_{{\mathbb R}^{2n}} \varphi(x-y)e^{ix\cdot \xi}F(y, \xi)dy\bar{d}\xi,\quad x\in{\mathbb R}^n,
\end{align*}
where $\bar{d}\xi=(2\pi)^{-n}d\xi$.

Then, for $f\in{\CAL S}'({\mathbb R}^n)$, the following inversion fomula holds (see \cite[Corollary 11.2.7]{G});
\begin{align}\label{if}
f(x) = \frac{1}{\|\varphi\|_{L^2}^2}
W_{\varphi}^{*}[W_{\varphi}f](x).
\end{align}
\end{definition}
For the proof of Theorem \ref{mt2}, we reduce \eqref{eq1} to a first-order partial differential equation in $\mathbb{R}^{2n}$ by using the wave packet transform. Using fomula
\begin{align*}
\frac{1}{2}(\nabla-i\bm{a})^2u=\frac{1}{2}\Delta u-\frac{1}{2}\bm{a}^2u-i\left(\frac{1}{2}(\nabla\cdot\bm{a})+\bm{a}\cdot\nabla \right)u,
\end{align*} we have
\begin{align}\label{2}
&W_{\varphi(t,\cdot)}\left[i\partial_tu+\frac{1}{2}\Delta u-\frac{1}{2}\bm{a}^2u\right](t,x,\xi)\\
&=\Big(i\partial_t+i\xi\cdot\nabla_x-\frac{|\xi|^2}{2}\notag\\
&\quad-\frac{1}{2}\bm{a}^2(t,x)-\frac{i}{2}\nabla_x\bm{a}^2(t,x)\cdot\nabla_{\xi}+\frac{1}{2}\nabla_x\bm{a}^2(t,x)\cdot x\Big)W_{\varphi(t,\cdot)}u(t,x,\xi)\notag\\
&+W_{\left(i\partial_t+\frac{\Delta}{2}\right)\varphi(t,\cdot)}u(t,x,\xi)+R_1u(t,x,\xi),\notag
\end{align}
where
\begin{align*}
R_1u(t,x,\xi)&=-\frac{1}{2}\sum_{j,k,l=1}^n\int \overline{\varphi(t,y-x)}R_{j,k,l}^1(t,y,x)u(t,y)e^{-iy\cdot\xi}dy,\\
R_{j,k,l}^1(t,y,x)&=\int_0^1(\partial^2_{x_kx_l}a_j^2)(t,x+\theta(y-x))(1-\theta)d\theta(y_k-x_k)(y_l-x_l).
\end{align*}
By integration by parts, we have
\begin{align}\label{3}
&W_{\varphi(t,\cdot)}\left[-i\left(\frac{1}{2}(\nabla\cdot\bm{a})+\bm{a}\cdot\nabla \right)u\right](t,x,\xi)\\
&=\Big(\frac{i}{2}(\nabla_x\cdot\bm{a})(t,x)-i\bm{a}(t,x)\cdot\nabla_x+\xi\cdot\bm{a}(t,x)+\nabla_x(\xi\cdot\bm{a}(t,x))\cdot(i\nabla_{\xi}-x)\Big)\notag\\
&\quad \times W_{\varphi(t,\cdot)}u(t,x,\xi)+R_2u(t,x,\xi)+R_3u(t,x,\xi),\notag
\end{align}
where\begin{align*}
R_2u(t,x,\xi)&=\sum_{k,l=1}^n \int \left(R^{2,1}_{k,l}(t,y,x)\overline{\varphi(t,y-x)}+R^{2,2}_{k,l}(t,y,x)\overline{\partial_{k}\varphi(t,y-x)}\right)u(t,y)e^{-iy\cdot\xi}dy,\\
R_3u(t,x,\xi)&=\sum_{j,k,l=1}^n \int \overline{\varphi_{k,l}(t,y-x)}\xi_ju(t,y)R_{j,k,l}(t,y,x)e^{-iy\cdot\xi}dy,\\
R_{k,l}^{2,1}(t,y,x)&=\frac{i}{2}\int_0^1(\partial^2_{x_kx_l}a_k)(t,x+\theta(y-x))(1-\theta)d\theta(y_l-x_l),\\
R_{k,l}^{2,2}(t,y,x)&=i\int_0^1(\partial_{x_l}a_k)(t,x+\theta(y-x))(1-\theta)d\theta(y_l-x_l),\\
R_{j,k,l}(t,y,x)&=\int_0^1(\partial^2_{x_kx_l}a_j)(t,x+\theta(y-x))(1-\theta)d\theta,\\
\varphi_{k,l}(t,y-x)&=(y_l-x_l)(y_k-x_k)\varphi(t,y-x).
\end{align*}
Taking $\varphi(t,x)=e^{it\Delta/2}\varphi_0(x)$ for  $\varphi_0(x)\!\in\!\CAL{S}(\R^n)$ and 
combining \eqref{2}-\eqref{3}, we transform \eqref{eq1} into
\begin{align*}
\begin{cases}
(i \partial _t + i\nabla_{\xi}H(t, x, \xi)\cdot\nabla_x-i\nabla_{x}H(t, x, \xi)\cdot \nabla_{\xi}\\
\hspace{4cm}+h(t,x,\xi))W_{\varphi(t, \cdot)}u(t, x, \xi)=Ru(t,x,\xi), \\
W_{\varphi(0, \cdot)}u(0,x,\xi) = W_{\varphi_0}u_0 (x, \xi),
\end{cases}
\end{align*}
where
\begin{align*}
H(t,x,\xi)&=\frac{1}{2}\left|\xi-\bm{a}(t,x)\right|^2,\\
h(t, x, \xi)&=-H(t,x,\xi)+\nabla_xH(t,x,\xi)\cdot x+\frac{i}{2}\nabla_x\cdot\bm{a}(t,x),\\
Ru(t,x,\xi)&=-R_1u(t,x,\xi)-R_2u(t,x,\xi)-R_3u(t,x,\xi).
\end{align*}
By the method of characteristics, we obtain the following lemma.
\begin{lem} 
For $t\in\mathbb{R}$ and $x, \xi\in\mathbb{R}^n$, we define $x(s)=x(s; t, x, \xi)$ and $\xi(s)=\xi(s; t, x, \xi)$ as the solutions of
\begin{align}\label{ce}
\begin{cases}
\dot{x}(s)=\nabla_{\xi}H(s, x(s), \xi(s)), & x(t) = x,\\
\dot{\xi}(s)=-\nabla_{x}H(s, x(s), \xi(s)), & \xi(t) = \xi.
\end{cases}
\end{align}
Then the solution $u(t, x)$ of \eqref{eq1} satisfies the integral equation
\begin{align}\label{fo}
W_{\varphi(t, \cdot)}u(t, x, \xi)&=e^{-i\int_{0}^{t}h(s, x(s), \xi(s))ds}\bigg(W_{\varphi(0, \cdot)}u_{0}(x(0;t, x, \xi), \xi(0; t, x, \xi))\notag\\
&\quad-i\int_0^te^{i\int_{0}^{\tau}h(s, x(s), \xi(s))ds}Ru(\tau, x(\tau; t, x, \xi), \xi(\tau; t, x, \xi))d\tau\bigg).
\end{align}
\end{lem}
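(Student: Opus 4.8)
The plan is to recognize the displayed first-order system for the wave packet transform as a linear transport equation on phase space and to integrate it by the classical method of characteristics, so that the lemma is nothing but Duhamel's formula along the Hamiltonian flow \eqref{ce}. Abbreviating $W(s,x,\xi)=W_{\varphi(s,\cdot)}u(s,x,\xi)$, the combination of \eqref{2} and \eqref{3} shows that $W$ satisfies
\begin{align*}
\left(i\partial_s+i\nabla_{\xi}H\cdot\nabla_x-i\nabla_xH\cdot\nabla_{\xi}+h\right)W(s,x,\xi)=Ru(s,x,\xi),
\end{align*}
together with the initial condition $W(0,\cdot)=W_{\varphi_0}u_0$, which holds because $\varphi(0,\cdot)=\varphi_0$ and $u(0,\cdot)=u_0$. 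The first three terms are, up to the factor $i$, the total derivative along the flow \eqref{ce}, and this is the structural fact the proof exploits.

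Restricting the equation to a single characteristic $s\mapsto(x(s),\xi(s))$ and applying the chain rule with $\dot x(s)=\nabla_{\xi}H$ and $\dot\xi(s)=-\nabla_xH$, the transport part becomes the genuine $s$-derivative, so that, writing $G(s):=W(s,x(s),\xi(s))$, the equation collapses to the scalar, first-order, linear inhomogeneous ODE
\begin{align*}
i\,\frac{d}{ds}G(s)+h(s,x(s),\xi(s))\,G(s)=Ru(s,x(s),\xi(s)),
\end{align*}
in which $h$ and $Ru$ are now prescribed functions of $s$ along the curve. Solving this ODE by Duhamel's formula (equivalently, an integrating factor built from $\int_0^{\cdot}h$) and integrating from $0$ to $t$, then invoking the terminal conditions $x(t)=x$, $\xi(t)=\xi$ (so that the left side is $G(t)=W_{\varphi(t,\cdot)}u(t,x,\xi)$) and the initial value $G(0)=W_{\varphi_0}u_0(x(0),\xi(0))$, yields precisely the representation \eqref{fo}.

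The genuine content lies not in this Duhamel step but in two justifications. First, the manipulations above are pointwise identities for a classical solution, so one should first record that for $u_0\in\CAL{S}(\R^n)$ the solution $u(s,\cdot)$ and the companion packet $\varphi(s,\cdot)=e^{is\Delta/2}\varphi_0$ are smooth and decay rapidly enough that $W(s,x,\xi)$ is $C^1$ in $(s,x,\xi)$ and the integrals defining $R_1u,R_2u,R_3u$ converge; this is what licenses differentiating $G$ along the flow and using \eqref{2}--\eqref{3}. Second, the formula is meaningful only once \eqref{ce} is known to have solutions on all of $[-T,T]$, i.e. the bicharacteristics do not escape in finite time. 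Here one uses $\nabla_{\xi}H=\xi-\bm{a}(s,x)$ and $\partial_{x_k}H=-\sum_{j}(\partial_{x_k}a_j)(\xi_j-a_j)$ together with Assumption \ref{A2}: since $\rho<1$ gives $|\bm{a}(s,x)|\le C\langle x\rangle^{\rho}$ and $|\nabla_x\bm{a}(s,x)|\le C\langle x\rangle^{\rho-1}\le C$, the right-hand side of \eqref{ce} grows at most linearly in $(x,\xi)$, and a coupled Gronwall estimate for $|x(s)|$ and $|\xi(s)|$ rules out blow-up and gives global existence and uniqueness of the flow. I expect this a priori control of the flow --- and especially the sharper, uniform-in-data estimate of the momentum $\xi(s)$ flagged in the introduction, on which the eventual bound of \eqref{fo} in the modulation norm really rests --- to be the main obstacle, the characteristics reduction itself being routine.
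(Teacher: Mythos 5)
Your proposal follows essentially the same route as the paper: the paper derives the phase-space transport equation from \eqref{2}--\eqref{3} and then obtains \eqref{fo} in a single line ``by the method of characteristics,'' which is exactly the characteristics/Duhamel reduction you carry out, so your argument is the paper's proof written out in full. The two extra justifications you flag (regularity of $W_{\varphi(t,\cdot)}u$ for Schwartz data, and global-in-time solvability of \eqref{ce} under Assumption \ref{A2} via the sub-linear growth of $\bm{a}$) are left implicit in the paper, so your write-up is, if anything, more complete.
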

Taking the $L^{\infty}({\mathbb R}^n_{x})$-$L^{\infty}({\mathbb R}^n_{\xi})$ norm on both sides of \eqref{fo}, we have the following integral inequality
\begin{align}\label{linftyxxi}
\left\|W_{\varphi(t, \cdot)}u(t, x, \xi)\right\|_{L^{\infty}_{x}L^{\infty}_{\xi}}
&\leq C_1\left\|W_{\varphi(0,\cdot)}u_{0}(x(0), \xi(0))\right\|_{L^{\infty}_{x}L^{\infty}_{\xi}}\\
&\quad+C_1\sum_{j=1}^3\left\|\int_0^{T}\left|R_ju(\tau , x(\tau), \xi(\tau))\right|d\tau\right\|_{L^{\infty}_{x}L^{\infty}_{\xi}}\notag
\end{align}
where $C_1=\sup_{\tau\in[0,T]}\left\|e^{\int_{0}^{\tau}|\frac{1}{2}\nabla_x\cdot\bm{a}(s,x(s),\xi(s))|ds}\right\|_{L^{\infty}_{x}L^{\infty}_{\xi}}$.

We get the estimate of the first term and remainder term with $j=1,2$ on the right hand side of the above by the same way discussed in \cite{M} or \cite{K1}.
\begin{lem}\label{j=1,2}
Let $T>0$, $\varphi_0\in\CAL{S}(\R^n)\setminus\{0\}$ and $N\in\N$ be $N\geq n+1$. Then
\begin{align*}
\left\|W_{\varphi(0,\cdot)}u_{0}(x(0), \xi(0))\right\|_{L^{\infty}_{x}L^{\infty}_{\xi}}&=\left\|W_{\varphi_0}u_{0}(x, \xi)\right\|_{L^{\infty}_{x}L^{\infty}_{\xi}},\\
\left\|R_ju(\tau , x(\tau), \xi(\tau))\right\|_{L^{\infty}_{x}L^{\infty}_{\xi}}&\leq C_2\left\|W_{\varphi(\tau, \cdot)}u(\tau, x, \xi)\right\|_{L^{\infty}_{x}L^{\infty}_{\xi}},\ \tau\in[0,T],\ j=1,2 
\end{align*}
hold where
\begin{align*}
C_2=\sup_{\tau\in[0,T]}\frac{\left\|\langle\eta\rangle^{-2N}\right\|_{L^1_{\eta}}}{\|\varphi_0\|_{L^2}}\sum_{k,l=1}^n\sum_{\substack{\beta_1,\beta_2\in\Z_+^n\\|\beta_1|+|\beta_2|\leq2N}}
\left\|\partial_y^{\beta_1}\varphi_{k,l}(\tau)\right\|_{L^1}
\left\|\partial_y^{\beta_2}\varphi(\tau)\right\|_{L^1}.
\end{align*}
\end{lem}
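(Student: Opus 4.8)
The first identity is a change of variables along the characteristic flow. The plan is to note that, for each fixed $t$, the map $(x,\xi)\mapsto(x(0;t,x,\xi),\xi(0;t,x,\xi))$ determined by \eqref{ce} is a bijection of $\R^{2n}$ onto itself (its inverse being the forward flow from time $0$ to time $t$), so that the supremum over $(x,\xi)\in\R^{2n}$ is unchanged under this substitution; combined with $\varphi(0,\cdot)=\varphi_0$ (since $\varphi(t,x)=e^{it\Delta/2}\varphi_0$ reduces to $\varphi_0$ at $t=0$), this yields the asserted equality. The identical bijectivity, now for the flow from $t$ to $\tau$, reduces the second assertion to the bound $\|R_ju(\tau,\cdot,\cdot)\|_{L^\infty_xL^\infty_\xi}\leq C_2\|W_{\varphi(\tau,\cdot)}u(\tau,\cdot,\cdot)\|_{L^\infty_xL^\infty_\xi}$ for $j=1,2$.

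To prove this reduced bound I would substitute the inversion formula \eqref{if} for $u(\tau,\cdot)$, written with the window $\varphi(\tau,\cdot)$ (note $\|\varphi(\tau,\cdot)\|_{L^2}=\|\varphi_0\|_{L^2}$ since $e^{i\tau\Delta/2}$ is unitary on $L^2$), into the integral defining $R_ju$, and then interchange the order of integration. This rewrites $R_ju(\tau,x,\xi)$ as an average of $W_{\varphi(\tau,\cdot)}u(\tau,z,\eta)$ against a kernel $K(\tau,x,\xi,z,\eta)=\int\overline{\varphi(\tau,y-x)}\,c(\tau,y,x)\,\varphi(\tau,y-z)\,e^{iy\cdot(\eta-\xi)}\,dy$, where $c$ collects the Taylor-remainder coefficient. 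Pulling $\|W_{\varphi(\tau,\cdot)}u\|_{L^\infty_xL^\infty_\xi}$ out of the average, it remains to bound $\iint|K|\,dz\,\bar d\eta$ uniformly in $(x,\xi,\tau)$.

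Two features make this kernel integral finite. First, the polynomial factors from the Taylor remainders are absorbed into modified windows: for $R_1$ the weight $(y_k-x_k)(y_l-x_l)$ turns $\overline{\varphi(\tau,y-x)}$ into $\overline{\varphi_{k,l}(\tau,y-x)}$, and the coefficient $\int_0^1(\partial^2_{x_kx_l}a_j^2)(\tau,x+\theta(y-x))(1-\theta)\,d\theta$ is bounded uniformly in $(\tau,y,x)$ by Assumption \ref{A2}; for $R_2$ the single weight $(y_l-x_l)$ produces the weighted windows $(y_l-x_l)\varphi$ and $(y_l-x_l)\partial_k\varphi$ with coefficients built from $\partial^2_{x_kx_l}a_k$ and $\partial_{x_l}a_k$, again bounded, so this case is handled the same way. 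Second, to make the $\eta$-integration converge I would gain frequency decay by inserting $e^{iy\cdot(\eta-\xi)}=\langle\eta-\xi\rangle^{-2N}(1-\Delta_y)^Ne^{iy\cdot(\eta-\xi)}$ and integrating by parts $2N$ times in $y$. The Leibniz rule then distributes at most $2N$ derivatives over the two window factors (producing the sum over $|\beta_1|+|\beta_2|\leq 2N$), the $y$- and $z$-integrations of $|\partial_y^{\beta_1}\varphi_{k,l}(\tau,y-x)|\,|\partial_y^{\beta_2}\varphi(\tau,y-z)|$ yield the products $\|\partial_y^{\beta_1}\varphi_{k,l}(\tau)\|_{L^1}\|\partial_y^{\beta_2}\varphi(\tau)\|_{L^1}$, and the $\eta$-integral of $\langle\eta-\xi\rangle^{-2N}$ gives $\|\langle\eta\rangle^{-2N}\|_{L^1_\eta}$, finite exactly because $2N>n$, i.e. because $N\geq n+1$. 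Assembling these factors (the bounded $a$-coefficients contributing only harmless constants) produces the constant $C_2$.

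The step I expect to be the most delicate, and the one where the hypothesis genuinely enters, is the uniform boundedness of the Taylor-remainder coefficients: the expansion $\partial^2_{x_kx_l}(a_j^2)=2(\partial_{x_k}a_j)(\partial_{x_l}a_j)+2a_j\,\partial^2_{x_kx_l}a_j$ is of size $\langle x\rangle^{2\rho-2}$, which is bounded precisely because $\rho<1$; for a linear or super-linear $\bm a$ these coefficients would grow in $x$ and the argument would fail. A secondary point requiring the characteristics lemmas is the global existence and invertibility of the flow \eqref{ce} underlying the change of variables, since $\nabla_xH$ grows linearly in $\xi$. Finally, I stress that the genuinely new obstruction of the paper — the remainder $R_3$ carrying the extra factor $\xi_j$, which is tied to the growth of the canonical momentum $\xi(\tau)$ — lies outside the present lemma; here the $\xi$-independence of $R_1$ and $R_2$ is exactly what allows the standard wave-packet argument of \cite{M} and \cite{K1} to apply.
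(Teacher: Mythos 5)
Your proposal is correct and follows essentially the same route as the paper: the first identity via the (bijective, measure-preserving) characteristic flow together with $\varphi(0,\cdot)=\varphi_0$, and the second bound by inserting the inversion formula \eqref{if} with window $\varphi(\tau,\cdot)$, integrating by parts $2N$ times against $\langle\eta-\xi\rangle^{-2N}(1-\Delta_y)^N e^{iy\cdot(\eta-\xi)}$, absorbing the Taylor weights into the modified windows $\varphi_{k,l}$, bounding the coefficient factors (and their $y$-derivatives) uniformly via Assumption \ref{A2} since $\rho<1$, and concluding with Hausdorff--Young, which is exactly the paper's outline. Your closing remarks correctly identify why $R_1,R_2$ are tractable here while $R_3$ (with its $\xi_j$ factor) requires the separate argument of Lemma \ref{inftyest}.
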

We only give an outline of the proof for the reader's convenience. For more detail, see \S4 in \cite{K1} or \S3 in \cite{M}.
\begin{proof}[Outline of the proof.]
The change of variables yields the first equality since the Jacobian of $(x(s), \xi(s))$ with respect to $(x, \xi)$ is constant 1 for any $s,t\in\R$ and $x,\xi\in\R^n$. We only treat $R_1u$. Applying the inversion fomula \eqref{if} and integration by parts for $2N$ times with $2N>n$, we have
\begin{align*}
&\left\|R_1u(\tau , x(\tau), \xi(\tau))\right\|_{L^{\infty}_{x}L^{\infty}_{\xi}}\\
&\leq C\sum_{j,k,l=1}^n\bigg\|\iiint (1-\Delta_y)^{N}\left(\overline{\varphi(\tau, y-x(\tau))}\varphi(\tau, y-z)R^1_{j,k,l}(\tau, y, x(\tau))\right)\\
&\hspace{3cm}\times\frac{e^{iy\cdot(\eta-\xi(\tau))}}{\langle \eta-\xi(\tau)\rangle^{2N}}W_{\varphi(\tau)}u(\tau,z,\eta)dydzd\bar{\eta}\bigg\|_{L^{\infty}_{x}L^{\infty}_{\xi}}.
\end{align*}
Thus we have the second inequality by Assumption \ref{A2} and Housdorff-Young's inequality.
\end{proof}
%

\begin{lem}\label{L2}
Let $\delta>0$. Then there exist $T_0\in(0,1)$ and a constant $C_{\delta}>0$ such that for any $T\in (0,T_0)$, $t\in\mathbb{R}$ and $x,\xi\in\mathbb{R}^n$, it holds that
\begin{align*}
\int_0^T\langle x(\tau;t,x,\xi)\rangle^{-1-\delta}\left|\xi(\tau;t,x,\xi)\right|d\tau\leq C_{\delta}(1+T).
\end{align*}
\end{lem}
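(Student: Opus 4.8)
The plan is first to remove the awkward quantifier $t\in\R$ by reducing to $t=0$. By the group property of the flow \eqref{ce}, $x(\tau;t,x,\xi)=x(\tau;0,\tilde x,\tilde\xi)$ and $\xi(\tau;t,x,\xi)=\xi(\tau;0,\tilde x,\tilde\xi)$, where $(\tilde x,\tilde\xi)=(x(0;t,x,\xi),\xi(0;t,x,\xi))$ ranges over all of $\R^n\times\R^n$ as $(x,\xi)$ does. Hence it suffices to prove the bound for $t=0$ and arbitrary initial data, which I again call $(x,\xi)$, so that the relevant interval $[0,\tau]\subseteq[0,T]$ is short. Writing $v(s)=\dot x(s)=\xi(s)-\bm{a}(s,x(s))$, the equations \eqref{ce} give $\dot\xi_k(s)=\sum_{j}v_j(s)\,\partial_{x_k}a_j(s,x(s))$. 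The crucial structural point is that $\dot\xi$ contains \emph{only spatial} derivatives of $\bm{a}$: by Assumption \ref{A2}, $|\partial_{x_k}a_j|\le C_1\langle x\rangle^{\rho-1}\le C_1$, so $|\dot\xi(s)|\le C|v(s)|$, whereas the kinetic velocity satisfies $\dot v=(\text{magnetic force})-\partial_s\bm{a}(s,x(s))$ with an \emph{uncontrolled} $\partial_s\bm{a}$. Therefore the whole argument must be organized around $\xi$, not $v$. I also record $|v|\le|\xi|+C\langle x\rangle^\rho$ and $|\bm{a}(s,x(s))|\le C\langle x\rangle^\rho$.

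Next I would derive a priori bounds on $[0,T]$ with $T<T_0<1$. Set $\bar P=\sup_{[0,T]}|\xi(\tau)|$ and $\bar Q=\sup_{[0,T]}\langle x(\tau)\rangle$. Integrating and using $|\dot\xi|\le C|v|\le C(|\xi|+\langle x\rangle^\rho)$ together with $|\frac{d}{ds}\langle x(s)\rangle|\le|v(s)|$ gives $\bar P\le|\xi|+CT(\bar P+\bar Q^\rho)$ and $\bar Q\le\langle x\rangle+T\bar P+CT\bar Q^\rho$. Choosing $T_0$ so small that $CT_0\le 1/2$ and absorbing the sub-linear $\bar Q^\rho$ terms by Young's inequality (this is where $\rho<1$ enters), these close to $\bar P\le 2|\xi|+C\bar Q^\rho$ and $\bar Q\le C(\langle x\rangle+T|\xi|+1)$. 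The same computation yields the Lipschitz-type bound $|\xi(\tau)-\xi|\le CT(\bar P+\bar Q^\rho)\le\tfrac12|\xi|+C\bar Q^\rho$ for every $\tau\in[0,T]$.

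The main obstacle is that the integrand $\langle x(\tau)\rangle^{-1-\delta}|\xi(\tau)|$ is \emph{not} bounded pointwise: the momentum can be large while the trajectory passes close to the origin. The resolution is a transit-time estimate expressing that a fast particle spends little time near the origin. Because the physical trajectory need not be nearly straight (again owing to the $\partial_s\bm{a}$ term in $\dot v$), I carry this out through the monotone coordinate $p(\tau)=x(\tau)\cdot\hat\xi$ with $\hat\xi=\xi/|\xi|$. First, $|\xi(\tau)|\le|v(\tau)|+C\langle x(\tau)\rangle^\rho$, and the potential part contributes $C\int_0^T\langle x(\tau)\rangle^{\rho-1-\delta}\,d\tau\le CT$ since $\rho-1-\delta<0$. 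For the remaining integral I split into two regimes. In the low-momentum regime $|\xi|\le K\langle x\rangle^\rho$, the bounds above give $\bar Q\le C(\langle x\rangle+1)$ and $\sup_{[0,T]}|\xi(\tau)|\le C(\langle x\rangle^\rho+1)$; moreover the particle cannot reach the origin, since $|\langle x(\tau)\rangle-\langle x\rangle|\le T\sup|v|\le CT(\langle x\rangle^\rho+1)\le\tfrac12\langle x\rangle$ for $\langle x\rangle$ large, so $\langle x(\tau)\rangle\ge\tfrac12\langle x\rangle$ and the integrand is $\le C\langle x\rangle^{\rho-1-\delta}\le C$, giving a contribution $\le CT$ (the case $\langle x\rangle$ bounded being immediate). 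In the high-momentum regime $|\xi|>K\langle x\rangle^\rho$, choosing $K$ large and $T_0$ small so that $C\bar Q^\rho\le\tfrac1{16}|\xi|$ (possible since $\bar Q^\rho\le C(\tfrac1K+T^\rho)|\xi|+C$ and $|\xi|>K$), the Lipschitz bound forces $\xi(\tau)\cdot\hat\xi\ge\tfrac7{16}|\xi|$, hence $\dot p=\xi(\tau)\cdot\hat\xi-\bm{a}\cdot\hat\xi\ge\tfrac38|\xi|>0$ and $|v|\le 2|\xi|$ on $[0,T]$. Since $\langle x(\tau)\rangle\ge\langle p(\tau)\rangle$ (as $|x|\ge|x\cdot\hat\xi|=|p|$) and $p$ is strictly increasing, the change of variables $\tau\mapsto p$ gives
\[
\int_0^T\frac{|v(\tau)|}{\langle x(\tau)\rangle^{1+\delta}}\,d\tau\le\int_0^T\frac{2|\xi|}{\langle p(\tau)\rangle^{1+\delta}}\,d\tau\le\frac{16}{3}\int_{-\infty}^{\infty}\frac{dp}{\langle p\rangle^{1+\delta}}=C_\delta<\infty,
\]
where finiteness uses $\delta>0$.

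Collecting the two regimes with the potential part yields $\int_0^T\langle x(\tau)\rangle^{-1-\delta}|\xi(\tau)|\,d\tau\le C_\delta(1+T)$, with $C_\delta$ and $T_0$ depending only on $n,\rho,\delta$ and the constants of Assumption \ref{A2}. I expect the delicate points to be, first, recognizing that $\xi$ (not the kinetic velocity) is the robust variable, and second, the bookkeeping that secures $\dot p\gtrsim|\xi|$ uniformly on $[0,T]$; once that lower bound is in hand, the change of variables is exactly what converts the apparent pointwise blow-up of the integrand into a finite, $(x,\xi,t)$-independent integral.
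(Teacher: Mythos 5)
Your proof is correct, but it takes a genuinely different route from the paper, because the paper does not actually prove Lemma \ref{L2}: it disposes of it in one line, asserting that the lemma ``is proved by putting $q(t)=x(t)$, $v(t)=\xi(t)-\bm{a}(t,x(t))$ in the proof of Lemma 2.1 in \cite{Ya1}'', i.e.\ by transferring Yajima's transit-time estimate for Newton's equations $\dot q=v$, $\dot v=-\partial_t\bm{a}+v\times B$ to the present Hamiltonian flow. Your argument is a self-contained reconstruction of such an estimate, and it differs in one essential organizational point: you work throughout with the canonical momentum $\xi$, whose evolution $\dot\xi_k=\sum_j v_j\,\partial_{x_k}a_j$ involves only spatial derivatives of $\bm{a}$, so that every bound you invoke ($|\bm{a}|\le C\langle x\rangle^{\rho}$, $|\nabla_x \bm{a}|\le C\langle x\rangle^{\rho-1}\le C$) comes directly from Assumption \ref{A2}. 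This is a real gain in transparency, since Assumption \ref{A2} contains no hypothesis on $\partial_t\bm{a}$, whereas the kinetic-momentum equation underlying the paper's citation carries the electric-field term $-\partial_t\bm{a}$; your version makes it evident that the lemma holds under exactly the stated, purely spatial, hypotheses. The individual steps check out: the reduction to $t=0$ by the group property of the (globally defined, by the at-most-linear growth of the right-hand side) flow; the closing of the a priori bounds $\bar P\le 2|\xi|+C\bar Q^{\rho}$ and $\bar Q\le C(\langle x\rangle+T|\xi|+1)$ by absorption and Young's inequality, which is where $\rho<1$ enters; the splitting $|\xi(\tau)|\le |v(\tau)|+C\langle x(\tau)\rangle^{\rho}$ with the potential part contributing $O(T)$; the low-momentum regime $|\xi|\le K\langle x\rangle^{\rho}$ handled by a pointwise bound on the integrand (with the near-origin case trivial); and the high-momentum regime handled by the uniform lower bound $\dot p\ge\tfrac38|\xi|$ for $p(\tau)=x(\tau)\cdot\hat\xi$, the comparison $\langle x(\tau)\rangle\ge\langle p(\tau)\rangle$, and the change of variables producing the $(x,\xi,t,T)$-independent constant $\tfrac{16}{3}\int_{\R}\langle p\rangle^{-1-\delta}dp$. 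The trade-off is the expected one: the paper's proof is a single sentence but requires the reader to open \cite{Ya1} and verify that the substitution is legitimate under weaker hypotheses than Yajima's; your proof is longer but verifiable in place.
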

The above estimate of characteristics \eqref{ce} is the key for the proof of our result.
The lemma is proved by putting $q(t)=x(t)$, $v(t)=\xi(t)-\bm{a}(t,x(t))$ in the proof of Lemma 2.1 in \cite{Ya1}.
\section{Proof of Theorem \ref{mt2}}\label{sec3}
In this section, we prove Theorem 1.4
by the following 4 steps:
\renewcommand{\theenumi}{\Roman{enumi}}
\begin{enumerate}
\item To show \eqref{main} for $p=2$.
\item To show \eqref{main} for $p=\infty$.
\item To show \eqref{main} for $2<p<\infty$ by usual complex interpolation.
\item To show \eqref{main} for $1\leq p<2$ by duality argument.
\end{enumerate}
(I)
From $L^2$ conservation law and Lemma \ref{msp} (ii), we get
\begin{align}\label{e1}\|u(t)\|_{M^{2, 2}_{\varphi_0}}\leq C_{\varphi_0}
\left\|u(t)\right\|_{L^2(\R^n)}= C_{\varphi_0}\left\|u_0\right\|_{L^2(\R^n)}\leq C_{\varphi_0}\|u_0\|_{M^{2, 2}_{\varphi_0}}.
\end{align}
(II) For the proof of the case $p=\infty$,
we prepare the following key lemma, which will be proved at the end of the section.
\begin{lem}\label{inftyest}
Under the assumption in Theorem \ref{mt2}, there exist a sufficiently large $\lambda\geq1$ and constant $\tilde{C}_{T}>0$ such that
\begin{align}\label{e2}
\left\|u(t)\right\|_{M^{\infty, \infty}_{\varphi^{\lambda}(t, \cdot)}}&\leq \tilde{C}_{T}\left\|u_0\right\|_{M^{\infty, \infty}_{\varphi^{\lambda }_0}}
\end{align}
holds for any $t\in[0, T]$, where $\varphi^{\lambda}_0(x)=\lambda^{n/2}\varphi_0(\lambda x)$, $\varphi^{\lambda}(t, x)=e^{it\Delta/2}\varphi^{\lambda}_0(x)$.
\end{lem}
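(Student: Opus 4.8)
The plan is to transfer the wave-packet/Gronwall scheme already used for $R_1,R_2$ to the concentrated window $\varphi^{\lambda}$ and to dispose of the genuinely new remainder $R_3$ — the one carrying the momentum factor $\xi_j$ — by means of Lemma~\ref{L2}. Since $\varphi^{\lambda}(t,\cdot)=e^{it\Delta/2}\varphi^{\lambda}_0$ again solves the free Schr\"odinger equation, the reduction producing \eqref{fo} holds verbatim with $\varphi$ replaced by $\varphi^{\lambda}$; taking the $L^{\infty}_xL^{\infty}_{\xi}$ norm gives \eqref{linftyxxi} for $G^{\lambda}(t):=\|W_{\varphi^{\lambda}(t,\cdot)}u(t,x,\xi)\|_{L^{\infty}_xL^{\infty}_{\xi}}$. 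Applying Lemma~\ref{j=1,2} with the window $\varphi^{\lambda}$, the initial term equals $\|u_0\|_{M^{\infty,\infty}_{\varphi^{\lambda}_0}}$ and the $R_1,R_2$ contributions are bounded by $C_2(\lambda,T)\int_0^tG^{\lambda}(\tau)\,d\tau$, which is of Gronwall type.

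The crux is $R_3$. Evaluated along the characteristics it produces $\xi_j(\tau;t,x,\xi)$, which is unbounded and cannot be estimated pointwise. I would insert the inversion formula \eqref{if} with window $\varphi^{\lambda}(\tau,\cdot)$ and integrate by parts in $y$, exactly as in the outline of Lemma~\ref{j=1,2}, so as to express $R_3u(\tau,x(\tau),\xi(\tau))$ through $W_{\varphi^{\lambda}(\tau)}u(\tau,z,\eta)$. The second derivatives $\partial^2_{x_kx_l}a_j$ inside $R_{j,k,l}$ are controlled by Assumption~\ref{A2}, $|\partial^2_{x_kx_l}a_j(\tau,\cdot)|\le C\langle\cdot\rangle^{\rho-2}$; setting $\delta:=1-\rho>0$ so that $\rho-2=-1-\delta$, and passing from the intermediate point $x(\tau)+\theta(y-x(\tau))$ to $x(\tau)$ via Peetre's inequality (the surplus weight $\langle y-x(\tau)\rangle^{1+\delta}$ being absorbed by the Schwartz factor $(y-x(\tau))_k(y-x(\tau))_l\varphi^{\lambda}(\tau,y-x(\tau))$), I expect a pointwise bound
\begin{align*}
|R_3u(\tau,x(\tau),\xi(\tau))|\le C_3(\lambda,\tau)\,\langle x(\tau)\rangle^{-1-\delta}\,|\xi(\tau)|\,G^{\lambda}(\tau).
\end{align*}
The decisive point is that the quadratic factor $(y-x(\tau))_k(y-x(\tau))_l$ in $\varphi^{\lambda}_{k,l}$ measures the spatial size of the window, which for $\varphi^{\lambda}_0(x)=\lambda^{n/2}\varphi_0(\lambda x)$ is concentrated at scale $\lambda^{-1}$; on the short time scale on which this concentration persists, $C_3(\lambda,\tau)$ carries a gain that can be made as small as desired by enlarging $\lambda$.

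Integrating in $\tau$ and applying Lemma~\ref{L2} to $\langle x(\tau)\rangle^{-1-\delta}|\xi(\tau)|$ bounds the time-integral of $R_3$ on an interval of length $h<T_0$ by $\big(\sup_{\tau}C_3(\lambda,\tau)\big)\,C_{\delta}(1+h)\,\sup G^{\lambda}$. Choosing $h$ of size comparable to $\lambda^{-2}$, on which $\varphi^{\lambda}$ stays concentrated so that $\sup_{\tau}C_3(\lambda,\tau)$ is small, I can make this coefficient $\le\frac12$ for $\lambda$ large and absorb the $R_3$ term, which leaves
\begin{align*}
\sup_{s\in[0,h]}G^{\lambda}(s)\le 2C_1\|u_0\|_{M^{\infty,\infty}_{\varphi^{\lambda}_0}}+2C_1C_2(\lambda,T)\int_0^{h}\sup_{r\in[0,\tau]}G^{\lambda}(r)\,d\tau.
\end{align*}
Gronwall's inequality then gives \eqref{e2} on such a subinterval; iterating over the finitely many subintervals covering $[0,T]$ and using the equivalence of modulation-space norms (Lemma~\ref{msp}(v)) to transport the evolved window across steps yields \eqref{e2} on all of $[0,T]$.

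The main obstacle is precisely the term $R_3$: the canonical momentum $\xi_j(\tau)$ grows along the flow and must be paired with the decay $\langle x(\tau)\rangle^{-1-\delta}$ — available only because the vector potential is sub-linear, $\rho<1$ — and then integrated by Lemma~\ref{L2}. Turning this into a coefficient small enough to be absorbed is what forces the window to be strongly concentrated, and is the sole reason for introducing the parameter $\lambda$.
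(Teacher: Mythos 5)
Your proposal follows the same architecture as the paper's proof: reduce via the wave packet transform with the scaled window $\varphi^{\lambda}$, treat $R_1,R_2$ by Lemma \ref{j=1,2}, and for $R_3$ insert the inversion formula \eqref{if}, integrate by parts in $y$, use Assumption \ref{A2} together with Peetre's inequality to extract the factor $\langle x(\tau)\rangle^{\rho-2}|\xi(\tau)|$, integrate in time by Lemma \ref{L2}, and absorb the resulting term using smallness in $\lambda$ under the time constraint $h\sim\lambda^{-2}$, iterating to reach $[0,T]$. (The paper absorbs all remainder terms at once by making $C_1(2C_2T_0+C\lambda^{-1})<1/2$ instead of invoking Gronwall, but that difference is cosmetic.)

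However, there is a genuine gap at the single step that constitutes the quantitative heart of the lemma: you never prove that the coefficient $C_3(\lambda,\tau)$ is small for large $\lambda$, and the recipe you give for producing it --- integration by parts ``exactly as in the outline of Lemma \ref{j=1,2}'' --- would in fact destroy the gain. The point is that every $y$-derivative falling on the scaled window costs a full factor of $\lambda$, since $\partial_y^{\alpha}\varphi^{\lambda}(\tau)=\lambda^{|\alpha|}(\partial^{\alpha}\varphi)^{\lambda}(\tau)$, while each of the two window $L^1$-norms contributes $\lambda^{-n/2}$ (valid only while $\lambda^2\tau\lesssim1$) and the quadratic factor $y_ky_l$ in $\varphi^{\lambda}_{k,l}$ contributes $\lambda^{-2}$. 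Hence after $2N$ integrations by parts the coefficient scales like $\lambda^{2N-2-n}$. Lemma \ref{j=1,2} as stated uses $N\geq n+1$, i.e.\ $2N\geq 2n+2$ derivatives, which gives $\lambda^{n}\to\infty$: no gain at all, and your absorption step collapses (shrinking $h$ cannot rescue it, because Lemma \ref{L2} yields $C_{\delta}(1+h)\geq C_{\delta}$, a constant that does not vanish as $h\to0$ --- as you yourself note, the smallness must come from $\lambda$ alone). One needs simultaneously $2N>n$, so that $\langle\eta\rangle^{-2N}$ is integrable in $\eta$, and $2N\leq n+1$, so that the coefficient is $O(\lambda^{-1})$; this forces exactly $2N=n+1$, which is an integer only for odd $n$, and for even $n$ the paper must replace \eqref{ibp} by the modified kernel $(1-\Delta_y)^{n/2}\,\eta\cdot(-i\nabla_y)/(\langle\eta\rangle^{n}|\eta|^2)$. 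This delicate counting of integrations by parts (together with the bookkeeping of the powers $(\lambda^2\tau)^{j}$ created by the free evolution of the window, which you did anticipate via $h\sim\lambda^{-2}$) is precisely the content of the paper's proof of \eqref{rem3}; asserting ``$C_3(\lambda,\tau)$ carries a gain'' without it leaves the proof incomplete, and following your stated prescription makes it false.
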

By virture of the above lemma and Lemma \ref{msp} (v), the estimate \eqref{main} with $p=\infty$ is obtained as
\begin{align}\label{e3}
\left\|u(t)\right\|_{M^{\infty, \infty}_{\varphi(t, \cdot)}}\leq C_{\varphi_0}\left\|u(t)\right\|_{M^{\infty, \infty}_{\varphi^{\lambda}(t, \cdot)}}&\leq C_{\varphi_0,T}\left\|u_0\right\|_{M^{\infty, \infty}_{\varphi^{\lambda }_0}}\leq C_{\varphi_0,T}\left\|u_0\right\|_{M^{\infty, \infty}_{\varphi_0}}.
\end{align}
(III) For the case that $2\leq p\leq \infty$, we get \eqref{main} by using \eqref{e1}, \eqref{e3} and Riesz-Thorin's interpolation theorem.\\
(IV) Suppose that $u(t,x)$ satisfies the same assumption stated in Theorem \ref{mt2} and \eqref{main}  holds for $t\in[-T,T]$ and $2\leq p\leq \infty$,
then it also holds for $1\leq p\leq 2$.
Indeed, let $p'$ be the H\"{o}lder conjugate of $p$ as $1/p'+1/p=1$ and $U(s,t)$ be the propagator of 
\begin{align}\label{eq}
\begin{cases}
i \partial _s u(s,x) + \frac{1}{2}\left(\nabla - i \bm{a}(s,x) \right)^2 u(s,x) = 0,\quad (s, x)\in \R\times \R^n, \\
u(t,x) = u_0 (x),\quad x\in \R^n.
\end{cases}
\end{align}
For $\phi\in L^{p'}(\R^{2n})$, we have
\begin{align*}
\left|\left(W_{\varphi(t)}u(t),\ \phi\right)_{L^2(\R^n_x \times \R^n_{\xi})}\right|
&=\|\varphi_0\|_{L^2}^{-2}\left|\left(W_{\varphi(t)}U(t,0)[
W_{\varphi_0}^{*}W_{\varphi_0}u_0],\phi\right)_{L^2(\R^n_x \times \R^n_{\xi})}\right|\\
&\leq C\left|\left(W_{\varphi_0}u_0,\ W_{\varphi_0}U(0,t)W^{\ast}_{\varphi(t)}\phi\right)_{L^2(\R^n_x \times \R^n_{\xi})}\right|\\
&\leq C\left\|W_{\varphi_0}u_0\right\|_{L^p_xL^p_{\xi}} \left\|W_{\varphi_0}U(0,t)W^{\ast}_{\varphi(t)}\phi\right\|_{L^{p'}_xL^{p'}_{\xi}},
\end{align*}
which shows
\begin{align*}
\left\|W_{\varphi(t)}u(t)\right\|_{L^{p}_xL^{p}_{\xi}}
&=\sup_{\substack{\phi\in L^{p'}(\R^n_x \times \R^n_{\xi})\\\phi\neq 0}}\frac{\left|\left(W_{\varphi(t)}u(t),\ \phi\right)_{L^2(\R^n_x \times \R^n_{\xi})}\right|}{\|\phi\|_{L^{p'}_xL^{p'}_{\xi}}}\\
&\leq C\left\|W_{\varphi_0}u_0\right\|_{L^p_xL^p_{\xi}}\sup_{\substack{\phi\in L^{p'}(\R^n_x \times \R^n_{\xi})\\\phi\neq 0}}\frac{ \left\|W_{\varphi_0}U(0,t)W^{\ast}_{\varphi(t)}\phi\right\|_{L^{p'}_xL^{p'}_{\xi}}}{\|\phi\|_{L^{p'}_xL^{p'}_{\xi}}}.
\end{align*}
%
Hence it suffices to prove that
\begin{align}\label{3.3}
\left\|W_{\varphi_0}U(0,t)W^{\ast}_{\varphi(t)}\phi\right\|_{L^{p'}_xL^{p'}_{\xi}}\leq C_T\|\phi\|_{L^{p'}_xL^{p'}_{\xi}}.
\end{align}
By virute of \eqref{fo}, the solution of \eqref{eq} is representated by the wave packet transform
\begin{align*}
W_{\varphi(s, \cdot)}u(s, x, \xi)&=e^{-i\int_{t}^{s}h(\tau, x(\tau), \xi(\tau))d\tau}\bigg(W_{\varphi(t, \cdot)}u_{0}(x(t ;s, x, \xi), \xi(t; s, x, \xi))\\
&-i\int_t^se^{i\int_{s}^{\tau}h(\tau', x(\tau'), \xi(\tau'))d\tau'}Ru(\tau, x(\tau; s, x, \xi), \xi(\tau; s, x, \xi))d\tau\bigg),
\end{align*}
where 
\begin{align*}
\begin{cases}
\dot{x}(\tau)=\nabla_{\xi}H(\tau, x(\tau), \xi(\tau)), & x(s) = x,\\
\dot{\xi}(\tau)=-\nabla_{x}H(\tau, x(\tau), \xi(\tau)), & \xi(s) = \xi,
\end{cases}
\end{align*}
and $\varphi(s)=e^{is\Delta/2}\varphi_0$. Thus we have for $2\leq p' \leq \infty$,
\begin{align}\label{3.5}
\left\|W_{\varphi(s)}U(s, t)u_0\right\|_{L^{p'}_xL^{p'}_{\xi}}\leq C_T\| W_{\varphi(t)}u_0\|_{L^{p'}_xL^{p'}_{\xi}},\qquad s\in[t-T, t+T].
\end{align}
Putting $u_0(x)=W^{\ast}_{\varphi(t)}\phi(x)$ and $s=0$ into \eqref{3.5}, we get \eqref{3.3} by virtue of
\begin{align*}
&\|W_{\varphi(t)}W^{\ast}_{\varphi(t)}\phi\|_{L^{p'}_xL^{p'}_{\xi}}
\\
&\leq \sum_{|\alpha_1|+|\alpha_2|\leq2N}\left\|\langle \xi \rangle^{-2N}\right\|_{L^1}\left\|\partial_x^{\alpha_1}\varphi(t)\right\|_{L^1_x}\left\|\partial_x^{\alpha_2}\varphi(t)\right\|_{L^1_x}\|\phi\|_{L^{p'}_xL^{p'}_{\xi}}\notag
\end{align*}
where $N\geq n+1$, which completes the proof.
\begin{proof}[Proof of Lemma \ref{inftyest}.]
It suffices to show that there exist constants $T_0=T_0(\la, n,\rho,\varphi_0)>0$ and $C=C(n,\rho,\varphi_0)>0$
such that
\begin{align}\label{rem3}
\left\|\int_0^{T_0} \left|R_3(\tau,x(\tau),\xi(\tau))\right|d\tau\right\|_{L^{\infty}_{x}L^{\infty}_{\xi}}\leq C\lambda^{-1}\sup_{\tau\in[0,T_0]}\left\|W_{\varphi^{\lambda}(\tau, \cdot)}u(\tau, x, \xi)\right\|_{L^{\infty}_{x}L^{\infty}_{\xi}}.
\end{align}

Indeed,
\eqref{rem3} and Lemma \ref{j=1,2} yield
\begin{align*}
\left\|W_{\varphi^{\la}(t, \cdot)}u(t, x, \xi)\right\|_{L^{\infty}_{x}L^{\infty}_{\xi}}
&\leq C_1\left\|W_{\varphi_0}u_{0}(x, \xi)\right\|_{L^{\infty}_{x}L^{\infty}_{\xi}}\\
&+C_1(2C_2T_0+C\lambda^{-1})\sup_{\tau\in[0,T_0]}\left\|W_{\varphi^{\lambda}(\tau, \cdot)}u(\tau, x, \xi)\right\|_{L^{\infty}_{x}L^{\infty}_{\xi}}.
\end{align*}
Thus the inequality \eqref{e2} holds for $t\in[0,T_0]$ if we take $\la$ and $T_0$ satisfying small as $C_1(2C_2T_0+C\lambda^{-1})<1/2$ and taking $\sup_{t\in[0,T_0]}$ on both sides of the above. For general $T>0$, we obtain the conclusion by applying the estimate \eqref{e2}, iteratively.


Now we will show \eqref{rem3}.
We prove \eqref{rem3} in the case that $n$ is odd. For even $n$, we can show by using
\begin{align*}
\frac{(1-\Delta_y)^{n/2}\eta\cdot(-i\nabla_y)}{\langle \eta\rangle^{n}|\eta|^2}e^{iy\cdot\eta}=\sum_{j=1}^n\frac{\eta_j}{\langle \eta\rangle^{n}|\eta|^2}(1-\Delta_y)^{n/2}(-i\partial_{y_j})e^{iy\cdot\eta}
\end{align*}
instead of
\begin{align}\label{ibp}
\frac{(1-\Delta_y)^{N}}{\langle \eta\rangle^{2N}}e^{iy\cdot\eta}=e^{iy\cdot\eta}
\end{align}
in the following proof.
Using the inversion fomula \eqref{if},
integration by parts and applying the fomula \eqref{ibp}
with $2N=n+1$, we have
\begin{align*}
&\int_0^T |R_3(\tau,x(\tau),\xi(\tau))| d\tau\\
%
%
&=\sum_{j,k,l=1}^n\sum_{\substack{|\alpha_1|+|\alpha_2|\\+|\alpha_3|\leq n+1}} C_{\alpha_1,\alpha_2,\alpha_3}\int_0^T\bigg|\iiint \partial^{\alpha_1}_y(\overline{\varphi_{k,l}^{\lambda}(\tau, y-x(\tau))})\partial^{\alpha_2}_y\varphi^{\lambda}(\tau, y-z)\\
&\quad\times\xi_j(\tau)\partial^{\alpha_3}_yR_{j,k,l}(\tau, y, x(\tau))\frac{e^{iy\cdot(\eta-\xi(\tau))}}{\langle \eta-\xi(\tau)\rangle^{n+1}}W_{\varphi^{\lambda}(\tau)}u(\tau,z,\eta)dydzd\bar{\eta}\bigg|d\tau.
\end{align*}

Here, we only estimate the right hand side for $\alpha_3=0$, fixed $\alpha_1$, $\alpha_2$ satisfying $|\alpha_1|+|\alpha_2|= n+1$ and fixed $j,k,l\in \{1,...,n\}$.
In the case that $\alpha_3\neq0$, it can be obtained in the same way.
%
From Assumption \ref{A2}, $|\xi_j(\tau)R_{j,k,l}(\tau, y, x(\tau))|$ can be estimated as
\begin{align*}
\left|\xi_j(\tau)R_{j,k,l}(\tau, y, x(\tau))\right|&\leq |\xi_j(\tau)|\int_0^1 |(\partial_{x_kx_l}a_j)(\tau,x(\tau)+\theta(y-x(\tau))|d\theta\\
&\leq C|\xi(\tau)|\int_0^1 \langle x(\tau)+\theta(y-x(\tau)))\rangle^{\rho-2} d\theta\\
&\leq C\frac{|\xi(\tau)|\langle y-x(\tau)\rangle^{2-\rho}}{\langle x(\tau)\rangle^{2-\rho}}.
\end{align*}
Hence we have
\begin{align*}
&\bigg\|\int_0^T\bigg|\iiint \partial^{\alpha_1}_y(\overline{\varphi_{k,l}^{\lambda}(\tau, y-x(\tau))})\partial^{\alpha_2}_y\varphi^{\lambda}(\tau, y-z)\\
&\quad\times \xi_j(\tau)R_{j,k,l}(\tau, y, x(\tau))\frac{e^{iy\cdot(\eta-\xi(\tau))}}{\langle \eta-\xi(\tau)\rangle^{n+1}}W_{\varphi^{\lambda}(\tau)}u(\tau,z,\eta)dydzd\bar{\eta}\bigg|d\tau\bigg\|_{L^{\infty}_{x}L^{\infty}_{\xi}}\\
&\leq C
\bigg\|\int_0^T\iiint \left|\partial_y^{\alpha_1}\varphi_{k,l}^{\lambda}(\tau, y-x(\tau))\right|\left|\partial_y^{\alpha_2}\varphi^{\lambda}(\tau, y-z)\right|\\
&\quad\times\langle y-x(\tau)\rangle^{2-\rho}\frac{\left|\xi(\tau)\right|}{\langle x(\tau)\rangle^{2-\rho}}\frac{\left|W_{\varphi^{\lambda}(\tau)}u(\tau,z,\eta)\right|}{\langle \eta-\xi(\tau)\rangle^{n+1}}dydzd\bar{\eta}d\tau\bigg\|_{L^{\infty}_{x}L^{\infty}_{\xi}}\\
&\leq C
\int_0^T\frac{\left|\xi_j(\tau)\right|}{\langle x(\tau)\rangle^{2-\rho}}d\tau\sup_{\tau\in[0,T]}\bigg(\iiint \langle y-x(\tau)\rangle^{2-\rho}\\
&\quad\times\left|\partial_y^{\alpha_1}\varphi_{k,l}^{\lambda}(\tau, y-x(\tau))\right|\left|\partial_y^{\alpha_2}\varphi^{\lambda}(\tau, y-z)\right|\frac{\left|W_{\varphi^{\lambda}(\tau)}u(\tau,z,\eta)\right|}{\langle \eta-\xi(\tau)\rangle^{n+1}}dydzd\bar{\eta}\bigg)\\
&\leq CC_{\rho}(1+T)
\\
&\times\sup_{\tau\in[0,T]}\left\|\left[\frac{\langle y\rangle^{2-\rho}|\partial_y^{\alpha_1}\varphi_{k,l}^{\lambda}(\tau, y)|}{\langle \eta\rangle^{n+1}}\ast\left[\left|\partial_y^{\alpha_2}\varphi^{\lambda}(\tau)\right|\ast \left|W_{\varphi^{\lambda}(\tau)}u(\tau)\right|\right]\right](\tau, x(\tau), \xi(\tau))\right\|_{L^{\infty}_{x}L^{\infty}_{\xi}}.
\end{align*} 
The last inequality of the above follows from Lemma \ref{L2} by taking sufficiently small $T\in(0,1)$ since $\rho-2<-1$.
Hence Hausdorff-Young's inequality yields 
\begin{align*}
&\left\|\int_0^T \left|R_3(\tau,x(\tau),\xi(\tau))\right|d\tau\right\|_{L^{\infty}_{x}L^{\infty}_{\xi}}\\
&\leq 2CC_{\rho}\sum_{j,k,l=1}^n\sum_{\alpha_1, \alpha_2, \alpha_3}
\left\|\langle\eta\rangle^{-(n+1)}\right\|_{L^1_{\eta}}\\
&\times\sup_{\tau\in[0,T]}
\left\|\langle y\rangle^{2+|\alpha_3|-\rho}\partial_y^{\alpha_1}\varphi_{k,l}^{\lambda}(\tau)\right\|_{L^1_y}
\left\|\partial_y^{\alpha_2}\varphi^{\lambda}(\tau)\right\|_{L^1_y}
\left\|W_{\varphi^{\lambda}(\tau)}u(\tau)\right\|_{L^{\infty}_xL^{\infty}_{\xi}}.
\end{align*}
The proof is done if we show
\begin{align}
\left\|\langle y\rangle^{2+|\alpha_3|-\rho}\partial_y^{\alpha_1}\varphi_{k,l}^{\lambda}(\tau)\right\|_{L^1_y}
\left\|\partial_y^{\alpha_2}\varphi^{\lambda}(\tau)\right\|_{L^1_y}\leq C_{n, \varphi_0}\la^{-1}.
\end{align} 
Since $\partial^{\alpha}e^{i\tau\Delta/2}=e^{i\tau\Delta/2}\partial^{\alpha}$ and $y_ke^{i\tau\Delta/2}=e^{i\tau\Delta/2}(y_k-i\tau\partial_{y_k})$ hold, 
we have 
\begin{align}
\partial_y^{\alpha}\varphi^{\lambda}(\tau,y)&=\lambda^{|\alpha|}(\partial_y^{\alpha}\varphi)^{\la}(\tau, y),\label{wpe}\\
\varphi_{k,l}^{\lambda}(\tau, y)&=y_ky_l\varphi^{\la}(t,x)\\
&=\lambda^{-2}\big((\varphi_{k,l})^{\la}(\tau, y)\notag\\
&\quad-2i\la^2\tau(\partial_l\varphi_{k})^{\la}(\tau, y)-(\la^2\tau)^2(\partial_l\partial_k\varphi)^{\la}(\tau, y)\big),\notag\\
\langle y\rangle^{2N}\varphi^{\la}(\tau, y)&=\sum_{|\beta_1|+|\beta_2|\leq 2N}(-i\la\tau)^{|\beta_1|}(\partial_y^{\beta_2}y^{\beta_1}\varphi)^{\la}(\tau, y),
\end{align}
for $\alpha\in\mathbb{Z}^n_{+}$, $N\in\N$ and $k, l=1, \ldots, n$.

%
For $\phi_0\in\CAL{S}(\R^n)$, $\tau\in[0,T]$ and $N\geq n+1$, it holds that
\begin{align}\label{fp}
\|\phi^{\la}(\tau)\|_{L^1}
&\leq C_{n,N,\phi_0}\la^{-n/2}\sum_{j=0}^{2N}(\la^2T)^{j}.
\end{align}
Indeed, using integration by parts and fomula \eqref{ibp}, we have
\begin{align*}
\|\phi^{\la}(\tau)\|_{L^1}
&=\int \left|\int e^{iy'\cdot\xi'}e^{-i\tau|\xi'|^2/2}\la^{-n/2}\widehat{\phi_0}(\xi'/\la)\bar{d}\xi'\right|dy'\\
&=\la^{n/2}\int \left|\int e^{i\la y'\cdot\xi}e^{-i\la^2\tau|\xi|^2/2}\widehat{\phi_0}(\xi)\bar{d}\xi\right|dy'\\
&=\la^{-n/2}\int \left|\int e^{iy\cdot\xi}e^{-i\la^2\tau|\xi|^2/2}\widehat{\phi_0}(\xi)\bar{d}\xi\right|dy\\
&\leq\la^{-n/2}\iint\left|\frac{(1-\Delta_{\xi})^{N}}{\langle y\rangle^{2N}}\left(e^{-i\la^2\tau|\xi|^2/2}\widehat{\phi_0}(\xi)\right)\right|\bar{d}\xi dy\\
&\leq\la^{-n/2}C_N\sum_{|\beta_1|+|\beta_2|\leq 2N}(\la^2\tau)^{|\beta_1|}\int \frac{dy}{\langle y\rangle^{2N}}\int\left|\xi^{\beta_1}\partial^{\beta_2}_{\xi}\widehat{\phi_0}(\xi)\right|\bar{d}\xi\\
&\leq C_{n,N}\max_{|\beta_1|+|\beta_2|\leq2N}\left\|\xi^{\beta_1}\partial^{\beta_2}_{\xi}\widehat{\phi_0}(\xi)\right\|_{L^1}\la^{-n/2}\sum_{j=0}^{2N}(\la^2T)^{j}.
\end{align*}
Combining \eqref{wpe}--\eqref{fp} and taking $T_0\in(0,1)$ with $\la^2T_0<1/2$, we have for $N\geq n+1$,
\begin{align*}
&\sup_{\tau\in[0,T_0]}\left\|\langle y\rangle^{2+|\alpha_3|-\rho}\partial_y^{\alpha_1}\varphi_{k,l}^{\lambda}(\tau)\right\|_{L^1_y}
\left\|\partial_y^{\alpha_2}\varphi^{\lambda}(\tau)\right\|_{L^1_y}\\
&\leq\lambda^{-2}\sup_{\tau\in[0,T_0]}\big\|\langle y\rangle^{2N}\partial_y^{\alpha_1}\big((\varphi_{k,l})^{\la}(\tau)-2i\la^2\tau(\partial_l\varphi_{k})^{\la}(\tau)-(\la^2\tau)^2(\partial_l\partial_k\varphi)^{\la}(\tau)\big)\big\|_{L^1_y}
\left\|\partial_y^{\alpha_2}\varphi^{\lambda}(\tau)\right\|_{L^1_y}\\
&\leq \lambda^{|\alpha_1|-2}\sup_{\tau\in[0,T_0]}\bigg(\left\|\langle y\rangle^{2N}\left(\partial_y^{\alpha_1}\varphi_{k,l}\right)^{\lambda}(\tau)\right\|_{L^1_y}+2\la^2T_0\left\|\langle y\rangle^{2N}\left(\partial_y^{\alpha_1}\partial_l\varphi_{k}\right)^{\lambda}(\tau)\right\|_{L^1_y}\\
&+(\la^2T_0)^2\left\|\langle y\rangle^{2N}\left(\partial_y^{\alpha_1}\partial_l\partial_k\varphi\right)^{\lambda}(\tau)\right\|_{L^1_y}\bigg)\lambda^{|\alpha_2|}\left\|\left(\partial_y^{\alpha_2}\varphi\right)^{\lambda}(\tau)\right\|_{L^1_y}\\
&\leq \lambda^{|\alpha_1|-2}\sum_{|\beta_1|+|\beta_2|\leq 2N}(\la T_0)^{|\beta_1|}\sup_{\tau\in[0,T_0]}\bigg(\left\|\left(\partial_y^{\beta_2}y^{\beta_1}\partial_y^{\alpha_1}\varphi_{k,l}\right)^{\lambda}(\tau)\right\|_{L^1_y}\\
&+2\la^2T_0\left\|\left(\partial_y^{\beta_2}y^{\beta_1}\partial_y^{\alpha_1}\partial_l\varphi_{k}\right)^{\lambda}(\tau)\right\|_{L^1_y}+(\la^2T_0)^2\left\|\left(\partial_y^{\beta_2}y^{\beta_1}\partial_y^{\alpha_1}\partial_l\partial_k\varphi\right)^{\lambda}(\tau)\right\|_{L^1_y}\bigg)\lambda^{|\alpha_2|}\left\|\left(\partial_y^{\alpha_2}\varphi\right)^{\lambda}(\tau)\right\|_{L^1_y}\\
&\leq C_{n,N,\phi_0}\lambda^{|\alpha_1|-2}\lambda^{|\alpha_2|}\sum_{|\beta_1|\leq 2N}(\la T_0)^{|\beta_1|}\left(\la^{-n/2}\sum_{j=0}^{2N}(\la^2T)^{j}\right)^2\\
%
%
%
%
&\leq C'_n \la^{-1}\left(\sum_{k=0}^{\infty}(\la^2T_0)^{k}\right)^3
\leq 8C'_n\la^{-1}.
\end{align*}
The constant $C_2$ in Lemma \ref{j=1,2} with $\varphi(\tau)=\varphi^{\la}(\tau)$ can be estimated as
\begin{align*}
{C_2}
\leq 4{C'}_{n,\varphi_0}\lambda^{-1},
\end{align*}
by using the above estimates. Hence we get \eqref{rem3} as
\begin{align*}
&\left\|\int_0^{T_0} \left|R_3(\tau,x(\tau),\xi(\tau))\right|d\tau\right\|_{L^{\infty}_{x}L^{\infty}_{\xi}}\\
&\leq 2C_{\rho}8C'_n\la^{-1}\left(\sum_{j,k,l=1}^n\sum_{\alpha_1, \alpha_2, \alpha_3}C_{\alpha}
\left\|\langle\eta\rangle^{-(n+1)}\right\|_{L^1_{\eta}}\right)
\sup_{\tau\in[0,T_0]}\left\|W_{\varphi^{\lambda}(\tau)}u(\tau)\right\|_{L^{\infty}_xL^{\infty}_{\xi}}\\
&\leq 16C_{\rho}C''_n\la^{-1}\sup_{\tau\in[0,T_0]}\left\|W_{\varphi^{\lambda}(\tau)}u(\tau)\right\|_{L^{\infty}_xL^{\infty}_{\xi}},
\end{align*}
which completes the proof.
\end{proof}

\end{document}